\font\smallit=cmti10
\renewcommand\section{\@startsection {section}{1}{\z@}
{-30pt \@plus -1ex \@minus -.2ex}
{2.3ex \@plus.2ex}
{\normalfont\normalsize\bfseries}}
\renewcommand\subsection{\@startsection{subsection}{2}{\z@}
{-3.25ex\@plus -1ex \@minus -.2ex}
{1.5ex \@plus .2ex}
{\normalfont\normalsize\bfseries}}
\renewcommand{\@seccntformat}[1]{\csname the#1\endcsname. }
\newcommand{\euler}[2]{\genfrac{\langle}{\rangle}{0pt}{}{#1}{#2}}
\newtheorem{theorem}{Theorem}
\newtheorem{lemma}{Lemma}
\begin{document}

\begin{center}
\uppercase{\bf Rectified Simplex Polytope Numbers}
\vskip 20pt
{\bf Michael A. Jackson}\\
{\smallit Department of Mathematics, Grove City College, Grove City, Pennsylvania 16127, USA}\\
{\tt majackson@gcc.edu}\\ 
\vskip 10pt
{\bf Doug Smith}\\
{\smallit Department of Mathematics, Grove City College, Grove City, Pennsylvania 16127, USA}\\
\vskip 10pt
{\bf Peter Jantsch}\\
{\smallit Department of Mathematics, Grove City College, Grove City, Pennsylvania 16127, USA}\\
\vskip 10pt
{\bf Christina Scurlock}\\
{\smallit Department of Mathematics, Grove City College, Grove City, Pennsylvania 16127, USA}\\
\vskip 10pt
{\bf Chelsea Snyder}\\
{\smallit Department of Mathematics, Grove City College, Grove City, Pennsylvania 16127, USA}\\
\vskip 10pt
{\bf Eric Fairchild}\\
{\smallit Department of Mathematics, Grove City College, Grove City, Pennsylvania 16127, USA}\\
\vskip 10pt
{\bf Robin Mabe}\\
{\smallit Department of Mathematics, Grove City College, Grove City, Pennsylvania 16127, USA}\\
\vskip 10pt
{\bf Emma Polaski}\\
{\smallit Department of Mathematics, Grove City College, Grove City, Pennsylvania 16127, USA}\\
\end{center}
\vskip 30pt
\centerline{\smallit Received: , Revised: , Accepted: , Published: } 
\vskip 30pt

\centerline{\bf Abstract}
\noindent
Polytope numbers for a given polytope are an integer sequence defined by the combinatorics of the polytope. Recent work by H. K. Kim and J. Y. Lee has focused on writing polytope number sequences as sums of simplex number sequences. We focus on $r$-rectified simplices and show that the sequences for these polytopes can be written as alternating sums of simplex numbers analogous to the inclusion-exclusion given by the geometric process of $r$-rectification.

\section{Introduction}
Polygonal numbers are sequences of integers based on the set of points forming polygons on the plane. These number sequences date back to the ancient Greeks. The polygonal numbers have been generalized to polyhedral numbers by the ancient Greeks and more recently to all higher dimensions by Kim \cite{hkk:pn} and then by Kim and Lee \cite{hkk:pnp}. For more on polytope numbers see also \cite{dd:fn}.

For a $d$-dimensional polytope $P$, we denote the set of faces of the polytope $P$ by $\mathcal{F}(P)$, and for a dimension $0\leq i\leq d$, we denote the $i$-dimensional faces of $P$ by $\mathcal{F}_i (P)$. In this definition of polytope numbers, we will assume that our polytope $P$ is uniform (vertex transitive) and that each face of $P$ is also a uniform polytope. We will assume that we have chosen a particular vertex of $P$, which we will denote $\mathbf{v}_P$.

Now we can define the polytope number sequence $\{P(n)\}_{n\geq 0}$ for the polytope $P$ as well as the interior number sequence $\{P(n)^\# \}_{n\geq 0}$. When $d=0$, $P(0)=P(0)^\#=0$, and $P(n)=P(n)^\#=1$ for $n\geq 1$. Now we assume that when $d>0$, and that for each $F\in \mathcal{F}_k (P)$ with $0\leq k<d$, the sequences $F(n)$ and $F(n)^\#$ are defined. We again start the sequences with $P(0)=P(0)^\#=0$,  $P(1)=1$, and $P(1)^\#=0$. The rest of the sequence is defined recursively by 
$$P(n)=P(n-1)+\sum _{\mathbf{v}_P\notin F \in \mathcal{F}(P)} F(n)^\# \textrm{ and }P(n)^\#=P(n)-\sum _{F\in \mathcal{F}(P)\setminus \{P\} } F(n)^\# .$$

Another way of thinking about the polytope number sequence is as a method of counting the number of points used to construct nested versions of the polytope each sharing one common vertex. This construction is done recursively where each edge coming from the given vertex $\mathbf{v}_P$ is extended by one additional point when using the $n-1$-indexed shape to create the $n$-indexed shape. Then each face of $P$ that does not contain $\mathbf{v}_P$ is added to the shape to complete an exterior where each edge has $n$ points and the faces are all complete $n$-indexed versions from the sequence of that face's polytope sequence. For more on this process see \cite[Section 1]{hkk:pn} and \cite[Section 3]{hkk:pnp}.

H. K. Kim gives formulas for the $d$-dimensional regular polytope number sequences \cite{hkk:pn}. We use $\alpha^d(n)$, $\beta^d (n)$, and $\gamma^d(n)$ to denote the $n^{th}$ number in the polytope sequences for the $d$ dimensional simplex, cross-polytope, and measure-polytope respectively. With this notation, Kim's formulas are
$$\alpha^d(n)=\binom{n+d-1}{d} \textrm{ , }\hspace{.2 in} \beta ^d(n)=\sum _{i=0}^{d-1} \binom{d-1}{i} \alpha^d(n-i)$$
$$\textrm{ and } \gamma^d(n)=\sum _{i=0}^{d-1} \euler{d}{i} \alpha^d(n-i).$$
It has been shown that any convex polytope number sequence for a $d$-dimensional polytope can be written as $$p^d(n)=\sum _{i=0}^{d-1} a_i \alpha^d(n-i)$$
where $a_0=1$ and $a_i\geq 0$ for each $i$. (See \cite{hkk:pnp} and \cite{j:oipns}.) This formula will be called the $d$-dimensional simplex decomposition of the polytope number sequence. 

Given a polytope $\mathcal{P}^d$ of dimension $d$, if $1\leq r<d$, then the $r$-rectification of $\mathcal{P}^d$ is the result of cutting off each vertex of $\mathcal{P}^d$ to the center of each $r$-face that contains that vertex. In the case where $\mathcal{P}^d$ is a regular polytope, the resulting $r$-rectified polytope will be uniform. Notice that the $(d-1)$-rectification of $\mathcal{P}^d$ will result in the dual polytope. 

For the polytope $\mathcal{P}^d$, we will use $\mathcal{F}_k(\mathcal{P}^d)$ to denote the set of $k$-faces of $\mathcal{P}^d$. Notice that if $r>1$, then the center portion of each edge will be cut off twice. So using the idea of inclusion-exclusion, we must add the center portion of each edged back in. In a similar fashion, if $r>2$, then the center of each 2-face is cut off the number of times equal to the number of vertices of that face. However, when the the center of the edges are added back in the center of the 2-face will be added in a number of times equal to the edges of that 2-face. Since each 2-face has an equal number of vertices and edges we are left to subtract one additional center of the 2-face. This idea continues for each dimension less than $r$. 

The inclusion-exclusion principle is mirrored in the formula for the $r$-rectified simplex polytope number sequences. We will use $\lambda _r ^d(n)$ to represent the $n^{th}$ term of the polytope number sequence for the $r$-rectified $d$-dimensional simplex where $r<d$. Recall that if $V^d(n)$ is the polytope number sequence for a given polytope, we use $V^d(n)^\#$ to represent the interior number sequence. This sequence is given by taking $V^d(n)$ and subtracting the exterior points for this polytope in the $d$-dimensional space it inhabits.

\begin{theorem}\label{thm:hrect}
For the case where $0\leq r< d$ and $n\geq 1$, we have the following formula for the polytope number sequence and the corresponding interior number sequence: 
$$\lambda^{d}_{r}(n)=\sum\limits^{r}_{i=0}(-1)^{r-i}\binom{d+1}{r-i}\alpha^d((i+1)n-r)$$
$$\lambda^{d}_{r}(n)^\#=\sum\limits^{r}_{i=0}(-1)^{r-i}\binom{d+1}{r-i}\alpha^d((i+1)n+r-2i)^\#.$$
\end{theorem}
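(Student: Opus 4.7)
The plan is to prove both identities through a geometric inclusion--exclusion argument that directly mirrors the rectification construction described in the introduction. I will identify the $n$-indexed $r$-rectified $d$-simplex with the lattice points of a sufficiently large $d$-simplex with $d+1$ corner caps (one per vertex) removed, and then compute both the total and interior counts by inclusion--exclusion over the multiply-excluded intersections of these caps.

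Using the standard barycentric realization of a size-$m$ $d$-simplex as
$$\left\{(a_0, \ldots, a_d) \in \mathbb{Z}_{\geq 0}^{d+1} \,:\, \textstyle\sum_{j} a_j = m - 1\right\},$$
so that $\alpha^d(m) = \binom{m+d-1}{d}$ counts its points, I claim that the $n$-indexed $r$-rectified $d$-simplex corresponds to the lattice points of the size-$((r+1)n - r)$ simplex satisfying $a_i \leq n - 1$ for every $i$; equivalently, the big simplex with the $d+1$ corner caps $\{a_i \geq n\}$ removed. For $S \subseteq \{0, 1, \ldots, d\}$ with $|S| = k$, the change of variables $b_i = a_i - n$ for $i \in S$ and $b_j = a_j$ for $j \notin S$ identifies the $k$-fold intersection of corner caps with the lattice points of a $d$-simplex of size $(r - k + 1)n - r$, contributing $\alpha^d((r-k+1)n - r)$ points. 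Inclusion--exclusion then yields
$$\lambda^d_r(n) = \sum_{k=0}^{r} (-1)^k \binom{d+1}{k} \alpha^d((r-k+1)n - r),$$
which is the theorem's first formula after the substitution $i = r - k$. The sum truncates at $k = r$: for $|S| \geq r + 1$ the constraint $\sum_{i \in S} a_i \geq kn$ combined with $\sum_{j} a_j = (r+1)n - r - 1$ would force $\sum_{j \notin S} a_j$ negative, leaving the intersection empty.

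For the interior formula, the same lattice realization identifies interior points with those satisfying $1 \leq a_j \leq n - 2$ for all $j$. A parallel inclusion--exclusion on violator subsets $\{a_i \geq n - 1\}$, using the substitution $b_j = a_j - 1$ for $j \notin S$ and $b_i = a_i - (n-1)$ for $i \in S$, realizes each $k$-wise intersection as an interior point set of a $d$-simplex whose size parameter computes out to $(r - k + 1)n - r + 2k$, contributing $\alpha^d((r-k+1)n - r + 2k)^{\#}$. Re-indexing $i = r - k$ produces exactly the theorem's second formula; the same termination argument applies because the shift $+2k$ does not restore solvability once $k \geq r + 1$ (since $r < d$).

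The main obstacle is justifying the lattice realization in the first place, since the polytope number sequence is defined in the introduction by a recursion over faces rather than as a direct lattice count. To close this gap I would induct on $n$ (with a nested induction on $d$), verifying the base cases $\lambda^d_r(0) = 0$ and $\lambda^d_r(1) = 1$ from the formula directly, and then matching the face recursion $P(n) = P(n-1) + \sum_{\mathbf{v}_P \notin F} F(n)^{\#}$ against the increment of the lattice count from $n-1$ to $n$. The delicate bookkeeping relies on decomposing the facets of the $r$-rectified $d$-simplex into vertex-truncation cells, which are $(r-1)$-rectified $(d-1)$-simplices, and face-reduction cells, which are $r$-rectified $(d-1)$-simplices (degenerating appropriately when $r = d - 1$); the inductive hypothesis then computes each interior contribution, and the resulting sum telescopes to the lattice increment as required.
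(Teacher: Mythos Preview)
Your inclusion--exclusion on the barycentric lattice model is correct and is a genuinely different route from the paper's. The paper never introduces a lattice realization; instead it runs a triple induction on $(r,d,n)$ entirely inside the recursive definition, using the face enumeration of the rectified simplex to express $\lambda^d_r(n)-\lambda^d_r(n-1)$ as a combination of lower $\lambda^m_k(n)^\#$ and then pushing through the four binomial identities of Lemmas~\ref{lem:claim1}--\ref{lem:claim4} to collapse that expression to the desired difference of $\alpha^d$-values. Your approach replaces all of that algebra with a one-line inclusion--exclusion over corner caps, which also \emph{explains} why the formula has the alternating $\binom{d+1}{r-i}$ shape that the paper only verifies.

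The real work in your proposal, however, is hidden in the final paragraph, and the sketch there is not yet a proof. You must exhibit the embedding of the size-$(n-1)$ lattice model into the size-$n$ model; the natural one sends $a_i\mapsto a_i+1$ on the $r+1$ coordinates that are equal to $n-1$ at your chosen vertex $\mathbf{v}_P$ and fixes the others, and you should check that its complement is exactly the union of the facets $\{a_i=0\}$ ($i$ among those $r+1$ coordinates) and $\{a_j=n-1\}$ ($j$ among the remaining $d-r$), i.e.\ precisely the facets missing $\mathbf{v}_P$. You also need the statement that every face of the lattice model, given by setting $a_i=0$ for $i\in S_0$ and $a_j=n-1$ for $j\in S_1$, is itself the size-$n$ lattice model of an $(r-|S_1|)$-rectified $(d-|S_0|-|S_1|)$-simplex, so that the inductive hypothesis on $(d,r)$ applies to each $F(n)^\#$. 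None of this is deep, but without it the phrase ``the resulting sum telescopes'' is an assertion, not an argument. Once those pieces are in place your proof is complete and considerably shorter than the paper's; the trade-off is that the paper's approach stays entirely within Kim's recursive framework and produces the auxiliary identities of Section~\ref{sec:lem} along the way.
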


It is of interest to note that the rectified versions of the other regular polytopes do not have number sequences that satisfy a similar formula based on the inclusion-exclusion arising from the geometric rectification.

We are interested in this formula to find the coefficients of the $d$-dimensional simplex decomposition of the $r$-rectified $d$-dimensional simplex numbers. To find these coefficients we need to relate the number sequences $\alpha^d((i+1)n-r)$ to $\alpha^d(n-j)$ where $j\in [d-1]_0$.

\begin{theorem}\label{th:bigalpha}
Let $n,a\geq 1$ and $b\geq 0$, then 
$$\alpha^d(an-(a-1)-b)=\sum_{j\geq 0} \sum_{i=0}^{j} (-1)^i \binom{d+1}{i} \binom{d+a(j-i)-b}{a(j-i)-b} \alpha^d(n-j).$$
\end{theorem}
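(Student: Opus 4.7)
The plan is to collapse the double sum on the right via ordinary generating functions in $n$. First I will re-index by setting $k=j-i$. Using $\binom{d+1}{i}=0$ for $i>d+1$ and $\binom{d+ak-b}{ak-b}=\binom{d+ak-b}{d}$ (with both vanishing when $ak<b$), swapping the order of summation rewrites the right-hand side as
\[
\sum_{i=0}^{d+1}(-1)^i\binom{d+1}{i}\,T(n-i),
\qquad T(m):=\sum_{k\geq 0}\binom{d+ak-b}{d}\,\alpha^d(m-k).
\]
Under the convention $\alpha^d(m)=0$ for $m\leq 0$ built into the recursive definition, both the original right-hand side and $T(m)$ are finite sums, and in particular $T(m)=0$ for $m\leq 0$.

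Next I will pass to generating functions. The standard identity $\sum_{n\geq 0}\alpha^d(n)x^n=x/(1-x)^{d+1}$, combined with the fact that $T$ is a Cauchy convolution, gives
\[
\sum_{n\geq 0}T(n)\,x^n \;=\; G(x)\cdot\frac{x}{(1-x)^{d+1}},
\qquad G(x):=\sum_{k\geq 0}\binom{d+ak-b}{d}x^k.
\]
The outer shifted alternating sum corresponds to multiplication by $\sum_{i=0}^{d+1}(-1)^i\binom{d+1}{i}x^i=(1-x)^{d+1}$. Therefore the generating function of the right-hand side is
\[
(1-x)^{d+1}\cdot G(x)\cdot\frac{x}{(1-x)^{d+1}} \;=\; x\,G(x),
\]
and the two factors involving $(1-x)^{d+1}$ cancel cleanly.

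The last step is to extract the coefficient of $x^n$ from $xG(x)$, which is $\binom{d+a(n-1)-b}{d}=\binom{an+d-a-b}{d}=\alpha^d(an-(a-1)-b)$; this matches the left-hand side. The main obstacle is not the algebra, which becomes routine once the factorization $(1-x)^{d+1}\cdot x/(1-x)^{d+1}$ is spotted, but rather verifying that the boundary conventions for $\alpha^d$ and for the reduced binomials $\binom{d+ak-b}{ak-b}$ are compatible, so that the finite sum in the statement really does equal the coefficient extracted from the formal generating function. This reduces to checking that both conventions force zero in the same ranges of $k$ and $j$, which is a direct verification.
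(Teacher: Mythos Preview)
Your argument is correct and takes a genuinely different route from the paper. The paper proceeds by a double induction on $d$ and $n$: it first reduces to the case $a>b\geq 0$, handles $d=1$ by explicit computation, and for the inductive step telescopes via $\alpha^d(m)-\alpha^d(m-a)=\sum_{k=0}^{a-1}\alpha^{d-1}(m-k)$ together with the inductive hypotheses in dimensions $d-1$ and $d$. Your approach instead recognises the right-hand side as a Cauchy convolution, passes to ordinary generating functions in $n$, and observes that the factor $(1-x)^{d+1}$ coming from the alternating outer sum cancels exactly against the denominator of $\sum_n\alpha^d(n)x^n=x/(1-x)^{d+1}$, leaving $xG(x)$. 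This is shorter and explains \emph{why} the coefficients $c_j$ have the product form $(1-x)^{d+1}\sum_k\binom{d+ak-b}{ak-b}x^k$ that the paper records (without proof) immediately after the theorem statement; in effect you are proving that remark directly. The paper's induction, by contrast, is more elementary in that it avoids formal power series entirely, at the cost of several pages of index manipulation. One small caution: your claim that $\binom{d+ak-b}{ak-b}=\binom{d+ak-b}{d}$ with both vanishing for $ak<b$ relies on the nonnegative-integer convention for the top argument; it is cleaner simply to keep $G(x)=\sum_k\binom{d+ak-b}{ak-b}x^k$ throughout and only invoke symmetry at the very last step, where $a(n-1)\geq b$ is equivalent to $an-(a-1)-b\geq 1$, exactly the range in which $\alpha^d$ is nonzero.
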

Notice that the formula can also be written as follows:
$$ \alpha^d(an-(a-1)-b)=\sum_{j\geq 0} c_j \alpha^d(n-j)$$
$$\textrm{where } \; \sum_{j\geq 0} c_j x^j = (1-x)^{d+1} \sum _{k\geq 0} \binom{d+ak-b}{ak-b} x^k .$$
If we define the generalized binomial coefficients of order $s$, $\binom{n}{m}_s$ as in \cite{bb:gpsp}, then the formula from Theorem \ref{thm:hrect} becomes:
$$\lambda^d_r(n)=\sum _{j=0}^{d-1} \left[ \sum_{i=0}^r (-1)^{r-i} \binom{d+1}{r-i} \binom{d+1}{(i+1)j+i-r}_{i+1} \right] \alpha^d(n-j).$$

The rest of the paper will lead to proofs of Theorems \ref{thm:hrect} and \ref{th:bigalpha}. In Section \ref{sec:id}, we show two important identities for the proofs, while Section \ref{sec:lem}, we prove necessary lemmas for the proof of Theorem \ref{thm:hrect}. The proof of Theorem \ref{thm:hrect} is in Section \ref{sec:th1}, and the proof of Theorem \ref{th:bigalpha} is in Section \ref{sec:thm2}.

We would like to acknowledge the Swezey Fund of Grove City College for funding this research.

\section{Identities}
\label{sec:id}
Before we give proofs for the two theorems in the introduction, we need to show identities that we will use extensively in the proofs.

\begin{lemma}[Alternating Vandermonde Indentity]\label{lem:altvan}
If $b,c,n>0$, then 
$$\sum_{k=0}^n (-1)^k \binom{c-k}{b-k} \binom{n}{k} = \binom{c-n}{b}.$$
\end{lemma}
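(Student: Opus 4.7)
My plan is to give a short generating-function proof, with induction on $n$ as a backup.

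The starting observation is that $\binom{c-k}{b-k}$ equals the coefficient $[x^{b-k}](1+x)^{c-k}$, so the entire sum can be read off as the coefficient of $x^b$ in a single generating function. I would write
$$
\sum_{k=0}^{n}(-1)^k\binom{c-k}{b-k}\binom{n}{k}
= [x^{b}]\,(1+x)^{c}\sum_{k=0}^{n}\binom{n}{k}\left(\frac{-x}{1+x}\right)^{k},
$$
so that the inner sum is a finite binomial expansion equal to $\bigl(1-\tfrac{x}{1+x}\bigr)^n=(1+x)^{-n}$. After simplification the whole thing collapses to $[x^b](1+x)^{c-n}=\binom{c-n}{b}$, which is exactly the right-hand side.

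If a fully elementary argument is preferred (so that no formal power-series machinery is invoked), I would instead induct on $n$. The base case $n=0$ is immediate. For the inductive step I would split $\binom{n+1}{k}=\binom{n}{k}+\binom{n}{k-1}$ by Pascal's rule, reindex the second sum by $k\mapsto k+1$, and apply the inductive hypothesis twice—once with parameters $(c,b,n)$ and once with $(c-1,b-1,n)$—to obtain $\binom{c-n}{b}-\binom{c-n-1}{b-1}$. A final application of Pascal's rule in the form $\binom{c-n}{b}=\binom{c-n-1}{b}+\binom{c-n-1}{b-1}$ delivers $\binom{c-n-1}{b}$, as required.

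There is no real obstacle here; the identity is a standard one and either route is routine. The only thing worth a brief remark is the convention on $\binom{c-k}{b-k}$ when $c-k<0$ or $b-k<0$: in the generating function proof the coefficient extraction handles these cases uniformly (terms with $b-k<0$ contribute nothing), and in the inductive proof one should interpret binomial coefficients via the polynomial formula $\binom{m}{j}=m(m-1)\cdots(m-j+1)/j!$ with the usual convention that $\binom{m}{j}=0$ for $j<0$, so that Pascal's identity still applies without case analysis.
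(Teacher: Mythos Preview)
Your proof is correct; both the generating-function argument and the inductive backup are valid and complete. The paper, however, takes a different and purely algebraic route: it applies the upper-negation identity $\binom{c-k}{b-k}=(-1)^{b-k}\binom{b-c-1}{b-k}$ to remove the dependence of the upper index on $k$, recognizes the resulting sum $\sum_k \binom{n}{k}\binom{b-c-1}{b-k}$ as an instance of Vandermonde's convolution $\binom{n+b-c-1}{b}$, and then undoes the upper negation to obtain $\binom{c-n}{b}$. That approach has the virtue of explaining the lemma's name---it literally reduces to Vandermonde after a sign flip---and needs neither formal power series nor an inductive hypothesis. Your generating-function proof is slicker and makes the identity fall out in one line once the binomial sum is collapsed; your inductive proof is the most elementary of the three but requires tracking two simultaneous parameter shifts $(c,b)\to(c-1,b-1)$. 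All three arguments are standard, and any would serve here.
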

We refer to this as the alternating Vandermonde identity for its similarity to Vandermonde's convolution.
\begin{proof}
Recall Vandermonde's convolution:
$$\sum_{k=0}^n \binom{r}{k}\binom{s}{n-k}=\binom{r+s}{n}.$$
Now we will start the alternating version. We will make use of negating the upper index, which states $\binom{r}{k}=(-1)^k \binom{k-r-1}{k}$.
\begin{align*}
\sum_{k=0}^n (-1)^k \binom{c-k}{b-k} \binom{n}{k} &= \sum_{k=0}^n (-1)^k (-1)^{b-k} \binom{b-c-1}{b-k}\binom{n}{k}\\
&=(-1)^b \sum_{k=0}^n \binom{b-c-1}{b-k}\binom{n}{k}\\
&=(-1)^b \binom{b-c-1+n}{b}\\
&= \binom{c-n}{b}.
\end{align*}
\end{proof}
In \cite[Section 3.2]{hkk:pnp} we are told that if one facet of the array defining a simplex number $\alpha^d(n)$ is cut away, the resulting set of points will be of size $\alpha^d(n-1)$. This follows from the identity $\alpha^d(n)-\alpha^{d-1}(n)=\alpha^d(n-1)$, which, when interpreted in terms of binomial coefficients, is just Pascal's identity. If we use $k$ facet cuts on $\alpha^d(n)$, this yields $\alpha^d(n-k)$. Since the exterior of a $d$-dimensional simplex is the $d+1$ facets, $\alpha^d(n)^\#=\alpha^d(n-d-1)$ (this is equivalent to doing $d+1$ facet cuts on $\alpha^d(n)$). Thus in terms of binioial coefficients, $\alpha^d (n) ^\sharp  = \binom{n-2}{d}$.
\begin{lemma}[Generalized Kim's Identity]\label{lem:kimid}
For $d>k\geq 0$ and $j\geq 0$,
$$\sum_{m=0}^{d-1} \binom{d-1-k}{m-k} \alpha^{m-k-1+j}(n)^\# = \alpha^{d-k+j-2}(n-j).$$
\end{lemma}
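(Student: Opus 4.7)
The plan is to translate the identity into binomial coefficients using the closed forms $\alpha^d(n) = \binom{n+d-1}{d}$ and $\alpha^d(n)^\# = \binom{n-2}{d}$ (both noted in the paragraph preceding the lemma), and then recognize the resulting convolution as an instance of Vandermonde's classical identity.

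First I would substitute these expressions into the left-hand side. The interior number $\alpha^{m-k-1+j}(n)^\#$ becomes $\binom{n-2}{m-k-1+j}$, so the LHS is
$$\sum_{m=0}^{d-1} \binom{d-1-k}{m-k}\binom{n-2}{m-k-1+j}.$$
Since $\binom{d-1-k}{m-k}$ vanishes unless $k \leq m \leq d-1$, I would reindex with $\ell = m - k$, so that $\ell$ runs from $0$ to $d-1-k$, reducing the sum to
$$\sum_{\ell=0}^{d-1-k} \binom{d-1-k}{\ell}\binom{n-2}{\ell+j-1}.$$

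Next, I would apply the symmetry $\binom{d-1-k}{\ell}=\binom{d-1-k}{d-1-k-\ell}$ and set $s = d-1-k-\ell$; the sum then takes the form
$$\sum_{s=0}^{d-1-k}\binom{d-1-k}{s}\binom{n-2}{(d-k+j-2)-s},$$
which Vandermonde's convolution collapses to $\binom{(d-1-k)+(n-2)}{d-k+j-2} = \binom{n+d-k-3}{d-k+j-2}$. On the other hand, the right-hand side rewrites as $\alpha^{d-k+j-2}(n-j) = \binom{(n-j)+(d-k+j-2)-1}{d-k+j-2} = \binom{n+d-k-3}{d-k+j-2}$, which matches.

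The argument is essentially a bookkeeping exercise, and I do not anticipate any serious obstacle. The only details to watch are the boundary behaviour of the binomial coefficients (negative values of $\ell + j - 1$ are handled by the standard convention $\binom{n-2}{t}=0$ for $t<0$, so such terms drop out of both sides harmlessly) and the verification that the hypothesis $d > k \geq 0$ makes the upper limit $d-1-k \geq 0$ non-degenerate. I do not expect to need the Alternating Vandermonde Identity (Lemma \ref{lem:altvan}) for this lemma; the ordinary Vandermonde convolution suffices.
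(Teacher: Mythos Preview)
Your proof is correct and follows essentially the same route as the paper: both translate to binomial coefficients via $\alpha^{m-k-1+j}(n)^\# = \binom{n-2}{m-k-1+j}$ and then invoke Vandermonde's convolution to obtain $\binom{n+d-k-3}{d-k+j-2}$. The only cosmetic difference is that the paper first recasts Vandermonde into the form $\sum_{j}\binom{n}{j}\binom{m}{j+p}=\binom{n+m}{n+p}$ and applies it directly, whereas you reindex the sum (symmetry and the substitution $s=d-1-k-\ell$) to match the standard statement; these are equivalent manipulations.
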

Notice that the case for $k=j=0$ is due to H.K. Kim \cite[p. 6]{hkk:pn}.
\begin{proof}
First we recall the Vandermonde convolution:
$$\sum _{j=0} ^{n} \binom{n}{j}\binom{m}{k-j} = \binom{n+m}{k}.$$
This can be rewritten:
$$\sum _{j=0} ^{n} \binom{n}{j}\binom{m}{m-k+j} = \binom{n+m}{k}$$
or by setting $m-k=p$
$$\sum _{j=0} ^{n} \binom{n}{j}\binom{m}{j+p} = \binom{n+m}{m-p} =\binom{n+m}{n+p}.$$
Now for our identity:
\begin{align*} \sum_{m=0}^{d-1} \binom{d-1-k}{m-k} \alpha^{m-k-1+j}(n)^\# &= \sum_{m=0} ^{d-1} \binom{d-1-k}{m-k} \binom{n-2}{m-k-1+j} \\
&= \binom{d+n-k-3}{d-k+j-2}\\
&= \alpha^{d-k+j-2}(n-j).
\end{align*}
\end{proof}
\section{Lemmas}
\label{sec:lem}
Before we prove the theorem we will prove several lemmas that will be useful in our proof.
\begin{lemma}\label{lem:claim1}
For $r,d\geq 0$ and $n\geq 1$,
$$\sum\limits^{r}_{k=0}\sum\limits^{d-r+k}_{m=1} \binom{d+1}{ r-k}\binom{d+1-r+k}{m+1} \sum\limits^{k}_{i=0}(-1)^{k-i}\binom{m+1}{k-i}\alpha^m((i+1)n+k-2i)^\#$$
$$\hspace{1 in} = \sum_{i=0}^r (-1)^{r-i} \binom{d+1}{r-i}\alpha^d ((i+1)n-r).$$
\end{lemma}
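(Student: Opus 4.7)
The plan is to prove the identity algebraically by expanding $\alpha^{m}(N)^{\#}=\binom{N-2}{m}$, interchanging the order of summation so that $i$ is outermost, and then collapsing the two remaining inner sums using Vandermonde-type identities. The target is to show that for each fixed $i\in\{0,\ldots,r\}$ the contribution to the LHS is exactly $(-1)^{r-i}\binom{d+1}{r-i}\alpha^{d}((i+1)n-r)$, so that summing over $i$ recovers the RHS term by term.

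First I would substitute $l=k-i$ and swap the order of summation, rewriting the LHS as $\sum_{i=0}^{r}\sum_{l=0}^{r-i}\sum_{m}(\cdots)$ with argument $(i+1)n+l-i$ inside $\alpha^{m}(\cdot)^{\#}$. Next I would apply the trinomial revision $\binom{a}{b}\binom{b}{c}=\binom{a}{c}\binom{a-c}{b-c}$ to split $\binom{d+1-r+i+l}{m+1}\binom{m+1}{l}$ as $\binom{d+1-r+i+l}{l}\binom{d+1-r+i}{m+1-l}$, separating the $m$-dependence from the $l$-dependence. A second application of the related identity $\binom{d+1}{r-i-l}\binom{d+1-r+i+l}{l}=\binom{d+1}{r-i}\binom{r-i}{l}$ (which comes from $\binom{n}{a}\binom{n-a}{b}=\binom{n}{a+b}\binom{a+b}{a}$ with $n=d+1$, $a=r-i-l$, $b=l$) factors $\binom{d+1}{r-i}$ out of the inner double sum.

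After substituting $s=m+1-l$, I would apply Vandermonde's convolution to the $s$-sum, yielding the single binomial $\binom{(i+1)n+l+d-r-1}{(i+1)n-i-1}$. For the remaining sum in $l$, I would negate the upper index of this binomial, apply the Vandermonde-type identity $\sum_{l}\binom{m}{l}\binom{n}{l+p}=\binom{m+n}{m+p}$ used in the proof of Lemma \ref{lem:kimid}, and then negate the upper index back. A direct computation shows the $l$-sum collapses to $(-1)^{r-i}\binom{(i+1)n-r+d-1}{d}=(-1)^{r-i}\alpha^{d}((i+1)n-r)$, which is exactly the coefficient appearing on the RHS.

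The main obstacle lies in the Vandermonde convolution over $s$: the original bound $1\le m\le d-r+i+l$ translates to an $s$-range that does not exactly match the natural range $s\ge 0$ needed for the convolution, so the small-$s$ boundary contributions (equivalently, the small-$m$ tail that borders the $\alpha^{0}$ convention) must be tracked with care. Once this bookkeeping is settled, the remaining manipulations are routine applications of the binomial identities developed in Section \ref{sec:id}.
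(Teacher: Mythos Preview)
Your approach is essentially the paper's: both separate the $m$- and $k$-dependence via trinomial revision, collapse the $m$-sum by a Vandermonde convolution (packaged in the paper as the Generalized Kim Identity, Lemma~\ref{lem:kimid}), and then collapse the remaining sum by an alternating Vandermonde (Lemma~\ref{lem:altvan}, which is exactly your negate--Vandermonde--negate maneuver on the $l$-sum). The boundary concern you flag at small $s$ corresponds precisely to the $m=0$ term, and the paper handles it implicitly by applying Lemma~\ref{lem:kimid} over the full range $0\le m\le d-1$ after its re-indexing, so the same bookkeeping is present there as well.
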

\begin{proof}
Start with the left hand side of the equation and then re-index by replacing $m$ by $m-r+k+1$.
\begin{align*}
&\sum\limits^{r}_{k=0}\sum\limits^{d-r+k}_{m=1} \binom{d+1}{ r-k}\binom{d+1-r+k}{m+1}\sum\limits^{k}_{i=0}(-1)^{k-i}\binom{m+1}{k-i}\alpha^m((i+1)n+k-2i)^\#\\
&=\sum\limits^{r}_{k=0}\sum\limits^{d-1}_{m=r-k}\sum\limits^{k}_{i=0} (-1)^{k-i}\binom{d+1}{r-k}\binom{m-r+k+2}{k-i}\binom{d+1-r+k}{m-r+k+2}\alpha^{m-r+k+1}((1+i)n+k-2i)^\#\\
&=\sum\limits^{r}_{k=0}\sum\limits^{d-1}_{m=r-k}\sum\limits^{k}_{i=0} (-1)^{k-i}\binom{d+1}{r-k}\binom{m-r+k+2}{k-i}\binom{d+1-r+k}{m-r+k+2}\alpha^{m-r+k+1}((1+i)n+k-2i)^\#\\
&=\sum\limits^{r}_{k=0}\sum\limits^{k}_{i=0} (-1)^{k-i}\binom{d+1}{r-k}\binom{d+1-r+k}{k-i}\sum\limits^{d-1}_{m=r-k}\binom{d+1-r+i}{m-r+i+2}\alpha^{m-r+k+1}((1+i)n+k-2i)^\#.
\end{align*}
Use the generalized Kim's identity on the $m$ summation and reverse the subset of a subset identity for the other binomial coefficients to result in the following:
\begin{align*}
&=\sum\limits^{r}_{k=0}\sum\limits^{k}_{i=0} (-1)^{k-i}\binom{d+1}{r-i}\binom{r-i}{r-k} \alpha^{d-r+k}((1+i)n-i)\\
&=\sum\limits^{k}_{i=0}\sum\limits^{r}_{k=0} (-1)^{k-i}\binom{d+1}{r-i}\binom{r-i}{r-k}\binom{(1+i)n-i+d+r+k-1}{d+r+k}\\
&\textrm{(re-index by replacing $k$ by $r-k$)}\\
&=\sum\limits^{k}_{i=0}\binom{d+1}{r-i}\sum\limits^{r}_{k=0}\binom{r-i}{k} (-1)^{(r-i)-k}\binom{(1+i)n-i+d-k-1}{d-k}\\
&\textrm{(use the alternating Vandermonde identity)}\\
&=\sum\limits^{r}_{i=0}\binom{d+1}{r-i} (-1)^{r-i}\binom{(1+i)n-i+d-1-r+i}{d}\\
&=\sum\limits^{r}_{i=0}(-1)^{r-i}\binom{d+1}{r-i}\alpha^d((i+1)n-r).
\end{align*}
\end{proof}
\begin{lemma}\label{lem:claim2}
For $d,r\geq 0$ and $n\geq 2$,
$$\sum\limits^{r}_{k=0}\sum\limits^{d-r+k}_{m=r-1} \binom{r+1}{r-k}\binom{d-r}{m-k} \sum\limits^{k}_{i=0}(-1)^{k-i}\binom{m+1}{k-i}\alpha^m((i+1)n+k-2i)^\#$$
$$\hspace{1 in}=\sum_{i=0}^r (-1)^{r-i}\binom{d+1}{r-i} \alpha^d((i+1)(n-1)-r).$$
\end{lemma}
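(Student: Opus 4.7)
The plan is to mirror the strategy of the proof of Lemma \ref{lem:claim1} as closely as possible. The right-hand side of Lemma \ref{lem:claim2} is obtained from the right-hand side of Lemma \ref{lem:claim1} by replacing $n$ with $n-1$ inside every $\alpha^d$, so the proof should parallel the earlier one, with the difference in the $\alpha$-argument shift (namely $(i+1)(n-1)-r$ versus $(i+1)n-r$) emerging only at the very end from an arithmetic identity of the form $(i+1)n-i-1-r=(i+1)(n-1)-r$.

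First I would re-index the inner $m$-summation by $m\mapsto m-r+k+1$, exactly as in the proof of Lemma \ref{lem:claim1}. This rewrites $\binom{d-r}{m-k}$ as $\binom{d-r}{m-r+1}$, which matches the pattern $\binom{d-1-\kappa}{m-\kappa}$ appearing in the generalized Kim's identity (Lemma \ref{lem:kimid}) with parameter $\kappa=r-1$; simultaneously, $\alpha^m$ becomes $\alpha^{m-r+k+1}$, matching $\alpha^{m-\kappa-1+j}$ with $j=k+1$. The coefficient $\binom{m+1}{k-i}$ becomes $\binom{m-r+k+2}{k-i}$, which obstructs direct application of Kim's identity; I would apply a subset-of-a-subset identity, of the form $\binom{a}{b}\binom{b}{c}=\binom{a}{c}\binom{a-c}{b-c}$, to redistribute the three binomial coefficients so that a single $m$-dependent coefficient (analogous to the $\binom{d+1-r+i}{m-r+i+2}$ appearing in the proof of Lemma \ref{lem:claim1}) multiplies $\alpha^{m-r+k+1}$, with the remaining factors depending only on $d,r,k,i$.

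Applying Lemma \ref{lem:kimid} to the resulting $m$-summation then collapses it and shifts the argument of $\alpha$ by $-(k+1)$, producing an $\alpha^{d-r+k}((i+1)n-2i-1)$ factor weighted by binomial coefficients in $r,k,i$. Swapping the $k$- and $i$-summations and re-indexing $k\mapsto r-k$ puts the inner $k$-sum in a form to which the alternating Vandermonde identity (Lemma \ref{lem:altvan}) applies. Evaluating this $k$-sum should collapse everything down to $(-1)^{r-i}\binom{d+1}{r-i}\alpha^d((i+1)n-i-1-r)=(-1)^{r-i}\binom{d+1}{r-i}\alpha^d((i+1)(n-1)-r)$ summed over $i\in\{0,\ldots,r\}$, which is exactly the right-hand side.

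The main obstacle is the combinatorial bookkeeping in the middle step: the triple product $\binom{r+1}{r-k}\binom{d-r}{m-k}\binom{m+1}{k-i}$ has a different shape than the triple in Lemma \ref{lem:claim1}, so selecting the correct sequence of subset-of-a-subset rearrangements that lines up cleanly with Kim's identity, and then with alternating Vandermonde, is the most delicate part. A secondary issue is the lower limit $m\geq r-1$: Lemma \ref{lem:kimid} is stated with the $m$-sum starting at $0$, so one must either verify that the extra terms with $0\leq m<r-1$ correspond to vanishing binomial coefficients or see them cancel explicitly; either way this should work out because the accompanying factor $\binom{r+1}{r-k}$ forces $k\leq r$ and the other coefficients kill off any truly extraneous contributions. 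Once these details are handled, the remaining computation is routine and mirrors the last few lines of the proof of Lemma \ref{lem:claim1}.
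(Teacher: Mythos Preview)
Your plan follows the proof of Lemma~\ref{lem:claim1} too closely at exactly the point where the two proofs must diverge. After the re-indexing $m\mapsto m-r+k+1$, the $m$-dependent binomial factors are $\binom{d-r}{m-r+1}$ and $\binom{m-r+k+2}{k-i}$. In Lemma~\ref{lem:claim1} the analogous pair was $\binom{d+1-r+k}{m-r+k+2}\binom{m-r+k+2}{k-i}$, which nests perfectly for subset-of-a-subset because the lower index of the first equals the upper index of the second. Here that does not happen: neither $\binom{d-r}{m-r+1}\binom{m-r+k+2}{k-i}$ nor any symmetric variant of it has the $\binom{a}{b}\binom{b}{c}$ shape, and $\binom{r+1}{r-k}$ does not help since it is $m$-free. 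So the ``redistribute the three binomial coefficients via subset-of-a-subset'' step cannot be carried out as stated, and you are stuck with two separate $m$-dependent factors multiplying $\alpha^{m-r+k+1}(\cdot)^\#$, which blocks Lemma~\ref{lem:kimid}.

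The paper's proof resolves this by inserting an auxiliary summation: it expands
\[
\alpha^{m-r+k+1}((1+i)n+k-2i)^\#=\sum_{j=0}^{r-k}(-1)^j\binom{r-k}{j}\,\alpha^{m+1}((1+i)n+r-2i-j)^\#,
\]
which is just an iterated Pascal identity in the dimension superscript. Now the $\alpha$-dimension is $m+1$, and the only surviving $m$-dependent coefficient is $\binom{d-r}{m-r+1}\binom{m-r+k+2}{k-i}$ together with the new $\binom{r-k}{j}$. Subset-of-a-subset is then applied to $\binom{r+1}{r-k}\binom{r-k}{j}$, and the $k$-sum is evaluated first via alternating Vandermonde, producing a factor $\binom{m-r+1}{r-i-j}$ that \emph{does} nest with $\binom{d-r}{m-r+1}$. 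Only after that second subset-of-a-subset does Kim's identity apply to the $m$-sum, and a final Vandermonde convolution over $j$ collapses everything to the claimed right-hand side. The missing idea in your proposal is precisely this auxiliary $j$-expansion; without it the subset-of-a-subset step you describe has no valid instantiation.
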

\begin{proof}
Start with the left hand side of the equation and then re-index by replacing $m$ by $m-r+k+1$.
\begin{align*}
&\sum\limits^{r}_{k=0}\sum\limits^{d-r+k}_{m=r-1} \binom{r+1}{r-k}\binom{d-r}{m-k} \sum\limits^{k}_{i=0}(-1)^{k-i}\binom{m+1}{k-i}\alpha^m((i+1)n+k-2i)^\#\\
&=\sum\limits^{r}_{k=0}\sum\limits^{d-1}_{m=r-1}\sum\limits^{k}_{i=0} (-1)^{k-i}\binom{r+1}{r-k}\binom{d-r}{m-r+1}\binom{m-r+k+2}{k-i}\alpha^{m-r+k+1}((1+i)n+k-2i)^\#\\
&=\sum\limits^{r}_{k=0}\sum\limits^{d-1}_{m=r-1}\sum\limits^{k}_{i=0}\sum ^{r-k}_{j=0} \left[(-1)^{k-i}\binom{r+1}{r-k}\binom{d-r}{m-r+1}\binom{m-r+k+2}{k-i}\right.\\
&\hspace{1 in}\left. \cdot (-1)^j\binom{r-k}{j}\alpha^{m+1}((1+i)n+r-2i-j)^\# \right]\\
&=\sum\limits^{d-1}_{m=r-1}\sum\limits^{r}_{k=0}\sum\limits^{k}_{i=0} \sum\limits^{r-k}_{j=0} \left[(-1)^{k-i+j}\binom{r+1}{r-k}\binom{d-r}{m-r+1}\binom{m-r+k+2}{k-i}\right.\\
&\hspace{1 in}\left. \cdot \binom{r-k}{j}\binom{(1+i)n+r-2i-2-j}{m+1}\right]\\
&\textrm{(use the subset of a subset identity and rearrange the order of the sums)}\\
&=\sum\limits^{d-1}_{m=r-1}\sum\limits^{r}_{i=0}\sum\limits^{r-i}_{j=0}\left(\left[ \sum\limits^{r-j}_{k=i}  (-1)^k \binom{r+1-j}{r-k-j}\binom{m-r+k+2}{k-i}\right]\right.\\
&\hspace{1 in}\left. \cdot (-1)^{j-i}\binom{r+1}{j}\binom{d-r}{m-r+1}\binom{(1+i)n+r-2i-2-j}{m+1}\right).
\end{align*}
Now we will focus just on the inner $k$ summation. First we will replace $k$ by $r-k$.
\begin{align*}
\sum\limits^{r}_{k=0}(-1)^k \binom{r+1-j}{r-k-j}\binom{m-r+k+2}{k-i} &=\sum\limits^r_{k=0}(-1)^{r-k}\binom{r+1-j}{k-j}\binom{m-k+2}{r-k-i}\\
\textrm{(replace $k-j$ by $p$)}\hspace{.2 in} &=\sum\limits^{r-j-i}_{p=0}(-1)^{r-j}(-1)^p \binom{r-j+1}{p}\binom{m+2-j-p}{r-i-j-p}\\
\textrm{(apply the alternating Vandermonde}&\textrm{ identity)}\\
&=(-1)^{r-j}\binom{m+2-r-1}{r-i-j}=(-1)^{r-j}\binom{m-r+1}{r-i-j}.
\end{align*}
Now put this simplification back with the other summations.
\begin{align*}
&\sum\limits^{d-1}_{m=r-1}\sum\limits^{k}_{i=0} \sum\limits^{r-k}_{j=0} (-1)^{-i+j}(-1)^{r-j} \binom{m-r+1}{r-i-j}\binom{r+1}{j}\binom{d-r}{m-r+1}\binom{((1+i)n+r-2i-2-j}{m+1}\\
&=\sum\limits^{k}_{i=0} \sum\limits^{r-k}_{j=0} (-1)^{r-i}\binom{r+1}{j}\binom{d-r}{r-i-j}\sum\limits^{d-1}_{m=r-1}\binom{d-2r+i+j}{m-2r+i+j+1}\alpha^{m+1}((1+i)n+r-2i-j)^\#\\
&\textrm{(apply the generalized Kim's identity)}\\
&=\sum\limits^{k}_{i=0} \sum\limits^{r-k}_{j=0} (-1)^{r-i}\binom{r+1}{j}\binom{d-r}{r-i-j}\alpha^{d}((1+i)n-r-i-1)\\
&\textrm{(apply Vandermonde's convolution on the $j$ summation)}\\
&=\sum\limits^{k}_{i=0}(-1)^{r-i}\binom{d+1}{r-i}\alpha^{d}((1+i)n-r-i-1)\\
&=\sum\limits^{k}_{i=0}(-1)^{r-i}\binom{d+1}{r-i}\alpha^{d}((1+i)(n-1)-r).
\end{align*}
\end{proof}
\begin{lemma}\label{lem:claim3}
For $d,r\geq 0$,
$$\sum_{k=0}^r (-1)^k \binom{d+1}{r-k}\binom{d+1-r+k}{k+1}=\binom{d+1}{r+1}.$$
\end{lemma}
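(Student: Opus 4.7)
The plan is to recognize the product $\binom{d+1}{r-k}\binom{d+1-r+k}{k+1}$ as a disguised trinomial so that its dependence on $k$ can be separated from an outer factor. A direct factorial expansion (both sides equal $\tfrac{(d+1)!}{(r-k)!(k+1)!(d-r)!}$) gives the subset-of-a-subset rewrite
$$\binom{d+1}{r-k}\binom{d+1-r+k}{k+1} \;=\; \binom{d+1}{r+1}\binom{r+1}{r-k},$$
which corresponds to first choosing $r+1$ elements from $d+1$ and then designating $r-k$ of them for the first block and $k+1$ for the second.

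Substituting this into the sum, the factor $\binom{d+1}{r+1}$ pulls out, and the reindexing $j=r-k$ turns the remaining expression into a standard alternating binomial sum:
$$\sum_{k=0}^{r}(-1)^k\binom{d+1}{r-k}\binom{d+1-r+k}{k+1} \;=\; \binom{d+1}{r+1}\,(-1)^r\sum_{j=0}^{r}(-1)^j\binom{r+1}{j}.$$
The inner partial sum is evaluated by truncating the identity $\sum_{j=0}^{r+1}(-1)^j\binom{r+1}{j}=0$: the missing top term is $(-1)^{r+1}$, so $\sum_{j=0}^{r}(-1)^j\binom{r+1}{j}=(-1)^r$. Multiplying by the external $(-1)^r$ yields $1$, and the claimed value $\binom{d+1}{r+1}$ drops out.

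The only real obstacle is spotting the hidden subset-of-a-subset factorization, because the two nested binomials in the summand do not initially share a common outer index; once both products are written in factorial form, the rearrangement is immediate and the rest is routine. As a sanity check, the case $d=2$, $r=1$ gives $\binom{3}{1}\binom{2}{1}-\binom{3}{0}\binom{3}{2}=6-3=3=\binom{3}{2}$, matching the right-hand side. (An alternative finish would be to invoke Lemma~\ref{lem:altvan} instead of the truncated binomial-theorem identity, but the direct computation above is shorter.)
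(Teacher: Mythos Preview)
Your proof is correct and cleaner than the paper's. Both arguments begin with a trinomial regrouping of the product, but you choose the grouping
\[
\binom{d+1}{r-k}\binom{d+1-r+k}{k+1}=\binom{d+1}{r+1}\binom{r+1}{r-k},
\]
which immediately factors the target $\binom{d+1}{r+1}$ out of the sum; what remains is exactly the partial alternating row sum $\sum_{k=0}^r(-1)^k\binom{r+1}{r-k}=1$, i.e.\ the content of Lemma~\ref{lem:claim4}. The paper instead uses the other regrouping $\binom{d+1}{r-k}\binom{d+1-r+k}{k+1}=\binom{d+1}{k+1}\binom{d-k}{r-k}$, expands $\binom{d+1}{k+1}$ by the column-sum (hockey-stick) identity, applies the alternating Vandermonde identity (Lemma~\ref{lem:altvan}) to collapse the $k$-sum, and finishes with a second column sum. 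Your route is shorter and makes the appearance of $\binom{d+1}{r+1}$ transparent; the paper's route has the minor virtue of exercising Lemma~\ref{lem:altvan}, which is the workhorse elsewhere in the article, but is otherwise a detour here.
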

\begin{proof}
We start by doing a reverse subset of a subset identity and then a column sum identity.
\begin{align*}
\sum_{k=0}^r (-1)^k \binom{d+1}{r-k}\binom{d+1-r+k}{k+1}&= \sum_{k=0}^r (-1)^k \binom{d+1}{k+1}\binom{d-k}{r-k}\\
&=\sum_{k=0}^r (-1)^k \sum_{i=0}^d \binom{i}{k}\binom{d-k}{r-k}\\
\textrm{(apply the alternating Vandermonde identity)}&\\
&=\sum_{i=0}^d \binom{d-i}{r}=\sum_{i=0}^d \binom{i}{r}=\binom{d+1}{r}.
\end{align*}
In the final line we replace $i$ by $d-i$ and then use the column sum identity.
\end{proof}

\begin{lemma}\label{lem:claim4}
For $r\geq 0$,
$$\sum_{k=0}^r (-1)^k \binom{r+1}{r-k} =1.$$
\end{lemma}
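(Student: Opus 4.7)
The plan is to reduce this to a partial-sum version of the standard binomial identity $(1-1)^{r+1}=0$. First I would reindex the sum on the left by substituting $j = r-k$, which rewrites it as
\[
\sum_{k=0}^{r}(-1)^k\binom{r+1}{r-k} \;=\; \sum_{j=0}^{r}(-1)^{r-j}\binom{r+1}{j} \;=\; (-1)^r\sum_{j=0}^{r}(-1)^{j}\binom{r+1}{j}.
\]
This moves the argument entirely to a truncated alternating sum of the row $\binom{r+1}{j}$.

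Next I would apply the binomial theorem: $\sum_{j=0}^{r+1}(-1)^{j}\binom{r+1}{j} = (1-1)^{r+1} = 0$. Separating off the last term $j=r+1$, whose value is $(-1)^{r+1}\binom{r+1}{r+1} = (-1)^{r+1}$, gives $\sum_{j=0}^{r}(-1)^{j}\binom{r+1}{j} = (-1)^{r}$. Substituting this back into the expression above yields $(-1)^r\cdot(-1)^r = 1$, which is the desired equality.

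There is really no main obstacle here; the only thing to check is that the reindexing keeps the range aligned so the only missing term is $j=r+1$, and that the sign bookkeeping $(-1)^{r-j} = (-1)^r(-1)^j$ is applied consistently. For an alternative presentation I could instead split via Pascal's identity $\binom{r+1}{r-k}=\binom{r}{r-k}+\binom{r}{r-k-1}$ and observe that the two resulting sums telescope against each other, leaving only the $k=0$ contribution $\binom{r}{r}=1$; but the binomial-theorem route is more compact and matches the style of the earlier identity lemmas in Section~\ref{sec:id}.
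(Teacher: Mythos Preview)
Your proof is correct and follows essentially the same route as the paper: both reindex $k\mapsto r-k$ (your $j$) to obtain $(-1)^r\sum_{j=0}^{r}(-1)^j\binom{r+1}{j}$ and then invoke the alternating row sum of Pascal's triangle to evaluate the partial sum as $(-1)^r$. The paper's version is terser, but the argument is the same.
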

\begin{proof}
Proceed by replacing $k$ by $r-k$ and then using the alternating sum of a row of Pascal's triangle.
$$\sum_{k=0}^r (-1)^k \binom{r+1}{r-k} =\sum_{k=0}^r (-1)^{r-k} \binom{r+1}{k}= (-1)^r (-1)^r=1.$$
\end{proof}
\section{Proof of Theorem \ref{thm:hrect}}
\label{sec:th1}
\begin{proof}
This proof will use a triple induction. We will induct on $r$, then on $d$, and finally on $n$. In working with the $d$ induction, we will show that the standard $r$-rectification sequence holds for dimension $d$ and all $n\geq 0$ using induction on $n$; then we will show that the interior $r$-rectification sequence holds for all $n\geq 0$ using the standard polytope sequence.

Also we will use the notation $\lambda _r^d(n)$ and $\lambda _r^d(n)^\#$ even in the case where $0<d\leq r$. These are no longer true polytope number sequences but will be defined by the formulas in the statement of Theorem \ref{thm:hrect}. So for $0<d\leq r$, we define
$$\lambda^{d}_{r}(n)=\sum\limits^{r}_{i=0}(-1)^{r-i}\binom{d+1}{r-i}\alpha^d((i+1)n-r)$$
$$\lambda^{d}_{r}(n)^\#=\sum\limits^{r}_{i=0}(-1)^{r-i}\binom{d+1}{r-i}\alpha^d((i+1)n+r-2i)^\#.$$
We start with the induction on $r$ and a base case of $r=0$. Notice that a $0$-rectified $d$-simplex is just a $d$-simplex. The formulas hold since they reduce to
$$\lambda^d_0 (n)= \alpha^d(n) \textrm{ and } \lambda^d_0 (n)^\#= \alpha^d(n)^\#.$$
Now we will assume that for each $k<r$, the formulas hold for $\lambda ^m_k  (n)$ and $\lambda ^m_k (n)^\#$ where $m> k$ and $n\geq 1$.

We will now fix the $r\geq 1$ and induct on $d$ starting with $d=1$. We are proving the equations in Theorem \ref{thm:hrect} for $d>r$; however, for $d<r$ we will show that $\lambda_r^d(n)^\#=0$ for $n>0$, and for $d=r$ we will show that $\lambda_r^r(n)^\#=(-1)^r$ for $n\geq 1$.

Next we will induct on $d$ having fixed $r$. We will assume the formula holds for $\lambda_k^m(n)^\#$ for all $k<r$ with any $m>0$ and for $m<d$ when $k=r$.

We will look at three cases: when $0<d<r$, when $0<d=r$, and when $0<r<d$. For the case where $0<d<r$, we will first show that $\lambda _r^d (n)^\# = 0 $ for $n>1$.\\
Using Lemma \ref{lem:claim2}, since $r>d$, $\lambda _r^d (n)=0$. Now using Lemma \ref{lem:claim1},
\begin{align*}
0&=\sum\limits^{r}_{k=0}\sum\limits^{d-r+k}_{m=1} \binom{d+1}{r-k}\binom{d+1-r+k}{m+1} \sum\limits^{k}_{i=0}(-1)^{k-i}\binom{m+1}{k-i}\alpha^m((i+1)n+k-2i)^\#\\
&=\sum\limits^{r}_{k=0}\sum\limits^{d-r+k}_{m=1} \binom{d+1}{r-k}\binom{d+1-r+k}{m+1} \lambda_k^m(n)^\#\\
&=\binom{d+1}{0}\binom{d+1}{d+1} \lambda_r^d(n)^\#.
\end{align*}
This last statement holds since in each term of the summation $m<k$; we can apply induction for each case where $m<d$ or $k<r$.

Next in our induction is the case where $d=r$; to show that $\lambda _r^r(n)=1$ for $n\geq 1$, we will need to induct on $n$. We start with $n=1$: 
$$\lambda^r_r(n)=\sum\limits^{r}_{i=0}(-1)^{r-i}\binom{r+1}{r-i}\alpha^r((i+1)-r).$$
The simplex number here is non-zero only when $i=r$. So we have 
$$\lambda^r_r(n)=\sum\limits^{r}_{i=0}(-1)^{r-i}\binom{r+1}{r-i}\alpha^r((i+1)-r)=\binom{r+1}{0}\alpha^r(1)=1.$$
Using Lemmas \ref{lem:claim1} and \ref{lem:claim2}, we can see that for $n>1$,
\begin{align*}
&\lambda^{r}_{r}(n)-\lambda^{r}_{r}(n-1)\\
&=\sum\limits^{r}_{i=0}(-1)^{r-i}\binom{r+1}{r-i}\alpha^r((i+1)n-r)-\sum\limits^{r}_{i=0}(-1)^{r-i}\binom{r+1}{r-i}\alpha^r((i+1)(n-1)-r)\\
&=\sum\limits^{r}_{k=0}\sum\limits^{k}_{m=1} \left[\binom{r+1}{r-k}\binom{k+1}{m+1}-\binom{r+1}{r-k}\binom{0}{m-k}\right] \lambda^{m}_{k}(n)^\#\\
\textrm{(this is only nonzero}&\textrm{ when $m=k$)}\\
&=\sum\limits^{r}_{k=0} \left[\binom{r+1}{r-k}\binom{k+1}{k+1}-\binom{r+1}{r-k}\binom{0}{0}\right] \lambda^{k}_{k}(n)^\#\\
&=0.
\end{align*}
The last equality holds since each coefficient is zero. This implies that $\lambda _r^r(n)=\lambda _r^r(1)=1$ for all $n\geq 1$. Now looking at the interior number, we use Lemma \ref{lem:claim1} and recall that $\lambda_k^m(n)^\#=0$ for $0<m<k\leq r$ and $\lambda_k^k(n)^\#=(-1)^k$ for $0<k<r$.
\begin{align*}
\lambda _r^r(n)=1&=\sum\limits^{r}_{k=0}\sum\limits^{k}_{m=1} \binom{r+1}{r-k}\binom{k+1}{m+1} \sum\limits^{k}_{i=0}(-1)^{k-i}\binom{m+1}{k-i}\alpha^m((i+1)n+k-2i)^\#\\
&=\sum\limits^{r}_{k=0}\sum\limits^{k}_{m=1} \binom{r+1}{r-k}\binom{k+1}{m+1} \lambda_k^m(n)^\#\\
&=\lambda_r^r(n)^\#+\sum\limits^{r-1}_{k=0} \binom{r+1}{r-k}\binom{k+1}{k+1}  \lambda_k^k(n)^\# \\
&=\lambda_r^r(n)^\#+\sum\limits^{r-1}_{k=0} \binom{r+1}{k+1}(-1)^k \\
&=\lambda_r^r(n)^\#+\sum\limits^{r}_{k=1} \binom{r+1}{k}(-1)^{k+1}.
\end{align*}
In the last step we reindexed. Now we solve for $\lambda_r^r(n)$; notice we get a partial alternating sum of a row of Pascal's triangle:
$$\lambda_r^r(n)^\#=1+\sum\limits^{r}_{k=1} \binom{r+1}{k}(-1)^{k}=\sum\limits^{r}_{k=0} \binom{r+1}{k}(-1)^{k}=(-1)^r.$$

Now we will assume that for every dimension less than $d$, both formulas hold and we will show the polytope number sequence formula for dimension $d$.
Notice that for true polytope number sequences, the $n=1$ term is always 1. Thus $\lambda_r^d(1)=1$ for all $r<d$. Also notice that if $r<d$, the formula on the right hand side of the first equation in Theorem \ref{thm:hrect} becomes
$$\sum\limits^{r}_{i=0}(-1)^{r-i}\binom{d+1}{r-i}\alpha^d((i+1)-r)$$
where only the $i=r$ term is non-zero. This sum becomes $\binom{d+1}{0}\alpha^d(1)=1$.\\
By induction on $n$, it is enough to show the following formula for $n>1$:
\begin{align*}
& \lambda^{d}_{r}(n)-\lambda^{d}_{r}(n-1)\\
&=\sum\limits^{r}_{i=0}(-1)^{r-i}\binom{d+1}{r-i}\alpha^d((i+1)n-r)-\sum\limits^{r}_{i=0}(-1)^{r-i}\binom{d+1}{r-i}\alpha^d((i+1)(n-1)-r).
\end{align*}
Using Kim's process we have the following formulation:
\begin{align*}
&\lambda_r^d(n)-\lambda_r^d(n-1)\\
&=\sum\limits^{r}_{k=0}\sum\limits^{d-r+k}_{m=k+1} \left[\binom{d+1}{r-k}\binom{d+1-r+k}{m+1}-\binom{r+1}{r-k}\binom{d-r}{m-k}\right] \lambda^{m}_{k}(n)^\#+\binom{d+1}{r+1}-1.
\end{align*}
Notice that the term when $k=r$ and $d=m$ is zero because the coefficient is zero.\\
By Lemmas \ref{lem:claim3} and \ref{lem:claim4},
\begin{align*}
\binom{d+1}{r+1}-1&=\sum_{k=0}^r (-1)^k \left[\binom{d+1}{r-k}\binom{d+1-r+k}{k+1}-\binom{r+1}{r-k} \right]\\
&=\sum_{k=0}^r \left[\binom{d+1}{r-k}\binom{d+1-r+k}{k+1}-\binom{r+1}{r-k} \right]\lambda _k^k(n)^\#.
\end{align*}
$\lambda _k^m(n)^\# = 0$ for all $0<m<k$; thus we can add these terms into the sequence as well, giving us
$$\lambda_r^d(n)-\lambda_r^d(n-1)=\sum\limits^{r}_{k=0}\sum\limits^{d-r+k}_{m=1} \left[\binom{d+1}{r-k}\binom{d+1-r+k}{m+1}-\binom{r+1}{r-k}\binom{d-r}{m-k}\right] \lambda^{m}_{k}(n)^\#.$$
We start by using the induction hypothesis to write the formula out with simplex numbers:
\begin{align*}
\lambda_r^d(n)-\lambda_r^d(n-1)&=\sum\limits^{r}_{k=0}\sum\limits^{d-r+k}_{m=1} \left( \left[\binom{d+1}{r-k}\binom{d+1-r+k}{m+1}-\binom{r+1}{r-k}\binom{d-r}{m-k}\right]\right.\\
&\hspace{1 in} \left.\cdot \sum\limits^{k}_{i=0}(-1)^{k-i}\binom{m+1}{k-i} \alpha^m((i+1)n+k-2i)^\# \right) \\
&\hspace{-.5 in}=\sum\limits^{r}_{k=0}\sum\limits^{d-r+k}_{m=1} \binom{d+1}{r-k}\binom{d+1-r+k}{m+1} \sum\limits^{k}_{i=0}(-1)^{k-i}\binom{m+1}{k-i}\alpha^m((i+1)n+k-2i)^\#\\
&\hspace{-.2 in}-\sum\limits^{r}_{k=0}\sum\limits^{d-r+k}_{m=1} \binom{r+1}{r-k}\binom{d-r}{m-k} \sum\limits^{k}_{i=0}(-1)^{k-i}\binom{m+1}{k-i}\alpha^m((i+1k)n+k-2i)^\#.
\end{align*}
Using Lemma \ref{lem:claim1} for the first sum and Lemma \ref{lem:claim2} for the second sum, we arrive at the desired equation:
\begin{align*}
&\lambda^{d}_{r}(n)-\lambda^{d}_{r}(n-1)\\
&=\sum\limits^{r}_{i=0}(-1)^{r-i}\binom{d+1}{r-i}\alpha^d((i+1)n-r)-\sum\limits^{r}_{i=0}(-1)^{r-i}\binom{d+1}{r-i}\alpha^d((i+1)(n-1)-r).
\end{align*}
This finishes the induction on $n$. Before we finish the induction on $d$, we need to look at the interior sequence.  Here there is no induction on $n$. 
\begin{align*}
\lambda^d_r(n)^\#&=\lambda^d_r(n)-\sum\limits^{r-1}_{k=0}\sum\limits^{d-r+k}_{m=k+1}\binom{d+1}{r-k}\binom{d+1-r+k}{m+1}\lambda^m_k(n)^\#\\
&\hspace{.2 in}-\sum\limits^{d-1}_{m=r+1}\binom{d+1}{0}\binom{d+1}{m+1}\lambda^m_r(n)^\#-\binom{d+1}{r+1}\\
\textrm{(apply Lemma }&\textrm{\ref{lem:claim3} and recall that $\lambda^m_k(n)^\#=0$ if $0<m<k$)}\\
&=\lambda^d_r(n)-\sum\limits^{r-1}_{k=0}\sum\limits^{d-r+k}_{m=1}\binom{d+1}{r-k}\binom{d+1-r+k}{m+1}\lambda^m_k(n)^\#\\
&\hspace{1 in}-\sum\limits^{d-1}_{m=1}\binom{d+1}{0}\binom{d+1}{m+1}\lambda^m_r(n)^\# .
\end{align*}
As we switch to write the expression above with interior simplex numbers, we will add and subtract $\sum_{i=0}^r (-1)^{k-i} \binom{d+1}{r-i} \alpha^d((i+1)n+r-2i)^\#$.
\begin{align*}
\lambda^d_r(n)^\#&=\lambda^d_r(n)+ \sum_{i=0}^r (-1)^{k-i} \binom{d+1}{r-i} \alpha^d((i+1)n+r-2i)^\#  \\
&\hspace{.2 in} -\sum\limits^{r}_{k=0}\sum\limits^{d-r+k}_{m=k+1}\left[ \binom{d+1}{r-k}\binom{d+1-r+k}{m+1}\sum_{i=0}^k \right. \\
&\hspace{1 in}\left. \cdot (-1)^{k-i} \binom{m+1}{k-i} \alpha^m((i+1)n+k-2i)^\#\right] \\
&\textrm{(by the inductive work we can simplify the first sum)}\\
&=\lambda^d_r(n)-\lambda^d_r(n)+\sum_{i=0}^r (-1)^{k-i} \binom{d+1}{r-i} \alpha^d((i+1)n+r-2i)^\#.
\end{align*}
This finishes the inductive step on $d$ and the proof of Theorem \ref{thm:hrect}.
\end{proof}
\section{Proof of Theorem \ref{th:bigalpha}}
\label{sec:thm2}
\begin{proof}
Suppose first that $b\geq a$; we will start with the right-hand side of the formula:
\begin{align*}
\alpha(an-(a-1)-b)&= \sum_{j\geq 0} \sum_{i=0}^{j} (-1)^i \binom{d+1}{i} \binom{d+a(j-i)-b}{a(j-i)-b} \alpha^d(n-j)\\
&= \sum_{j\geq 0} \sum_{i=0}^{j} (-1)^i \binom{d+1}{i} \binom{d+a(j-i-1)-(b-a)}{a(j-i-1)-(b-a)} \alpha^d(n-j)\\
&=\sum_{j\geq -1} \sum_{i=0}^{j+1} (-1)^i \binom{d+1}{i} \binom{d+a(j-i)-(b-a)}{a(j-i)-(b-a)} \alpha^d(n-j-1)\\
&=\sum_{j\geq 0} \sum_{i=0}^{j} (-1)^i \binom{d+1}{i} \binom{d+a(j-i)-(b-a)}{a(j-i)-(b-a)} \alpha^d((n-1-j)\\
&=\alpha(a(n-1)-(a-1)-(b-a)).
\end{align*}
From the second to the third line, we re-index so that $j$ is replaced by $j+1$. We can ignore the terms for $i=j+1$ (including when $j=-1$)  since it will be 0 ($b>0$). 
Using this equality, we can always choose $a$ and $b$ such that $a>b\geq 0$ by replacing $b$ by $b-a$ and $n$ by $n-1$.

We will now use induction on $d$ and $n$. Start with the case that $d=1$ (with $a>b\geq 0$). Begin with the right hand side of the formula:
\begin{align*}
&\; \sum_{j\geq 0} \sum_{i=0}^{j} (-1)^i \binom{2}{i} \binom{1+a(j-i)-b}{a(j-i)-b} \alpha^1(n-j)\\
&= \sum_{j\geq 0} \left[ \binom{1+a(j-1)-b}{a(j-1)-b} -2\binom{1+a(j-1)-b}{a(j-1)-b}+ \binom{1+a(j-2)-b}{a(j-2)-b} \right] (n-j).
\end{align*}
Notice that for the terms where $j\geq 3$, the sum is zero and may be ignored.
Also if $a(j-k)-b\geq 0$, then $\binom{1+a(j-k)-b}{a(j-k)-b}=1+a(j-k)-b$.

If $b>0$, then the term when $j=0$ is zero and may be ignored as well. In this case the sum is 
$$[(1+2a-b)-2(1+a-b)](n-2)+[1+a-b](n-1)=an-a+1-b=\alpha^1 (an-(a-1)-b).$$
On the other hand, if $b=0$, then the $j=2$ term is zero and can be ignored. In this case the sum is
$$[1+a-2(1)](n-1)+[1]n=an-(a-1)=\alpha^1 (an-(a-1)).$$
This finishes the base case for $d=1$.

We now assume the formula holds for $d-1$ and any $n$. To do the inductive step for $d$, we will do induction on $n$. Start by assuming that $n=1$.
The left hand side of the formula is $\alpha^d(a-(a-1)-b)=\alpha^d(1-b)$, which is zero unless $b=0$; if $b=0$, this simplex number is equal to 1.

On the right-hand side of the formula, if $j>0$, the simplex number in the formula is zero. Using this simplification we arrive at the equation below.
\begin{align*}
\sum_{j\geq 0} \sum_{i=0}^{j} &(-1)^i \binom{d+1}{i} \binom{d+a(j-i)-b}{a(j-i)-b} \alpha^d(1-j)\\
&=  \sum_{i=0}^{0} (-1)^i \binom{d+1}{i} \binom{d+a(-i)-b}{a(-i)-b} \alpha^d(1-j)\\
&=  \binom{d+1}{0} \binom{d-b}{-b} \alpha^d(1)=\binom{d-b}{-b}.\\
\end{align*}
Just as with the left hand side of the equation, this formula is zero unless $b=0$ in which case it is equal to 1.

To finish we assume the formula holds for all $\alpha^d(ak-(a-1)-b)$ when $k<n$, and the formula holds for all $n\geq 1$ when the dimension is less than $d$. Now we consider the formula above for $n$ and $d$:
\begin{align*}
\alpha^d(an&-(a-1)-b)=\sum_{k=0}^{a-1}\alpha^{d-1}(an-(a-1)-b-k)+ \alpha^d(a(n-1)-(a-1)-b)\\
&= \sum_{k=0}^{a-1}\sum_{j\geq 0} \sum_{i=0}^d (-1)^i \binom{d}{i} \binom{d-1+a(j-i)-b-k}{a(j-i)-b-k} \alpha^{d-1}(n-j) \\
&\hspace{.2 in} +\sum_{j\geq 0} \sum_{i=0}^{d+1} (-1)^i \binom{d+1}{i} \binom{d+a(j-i)-b}{a(j-i)-b} \alpha^{d}(n-j-1)\\
&= \sum_{j\geq 0} \sum_{i=0}^d (-1)^i \binom{d}{i} \binom{d+a(j-i)-b}{a(j-i)-b}  \alpha^{d-1}(n-j)\\
&\hspace{0.2 in} - \sum_{j\geq 0} \sum_{i=0}^d (-1)^i \binom{d}{i} \binom{d+a(j-i-1)-b}{a(j-i-1)-b} \alpha^{d-1}(n-j)\\
&\hspace{0.2 in} +\sum_{j\geq 0} \sum_{i=0}^{d+1} (-1)^i \binom{d+1}{i} \binom{d+a(j-i)-b}{a(j-i)-b} \alpha^{d}(n-j-1)\\
&\textrm{(re-index the middle term and combine all three terms)}\\
&=\sum_{j\geq 0} \sum_{i=0}^d (-1)^i \binom{d+a(j-i)-b}{a(j-i)-b}\\
&\hspace{.2 in} \left[ \binom{d}{i} \alpha^{d-1}(n-j)+\binom{d}{i-1} \alpha^{d-1}(n-j) + \binom{d+1}{i} \alpha^d (n-j-1)\right]\\
& =\sum_{j\geq 0} \sum_{i=0}^d (-1)^i \binom{d+1}{i}\binom{d+a(j-i)-b}{a(j-i)-b}\alpha^d(n-j).
\end{align*}
This finishes the induction on $d$ and the proof of Theorem \ref{th:bigalpha}.
\end{proof}

\end{document}